\theoremstyle{plain}
\newtheorem{theorem}{Theorem}
\newtheorem{proposition}{Proposition}
\theoremstyle{definition}
\newtheorem{definition}{Definition}
\newcommand{\FF}{\mathcal{F}}
\newcommand{\GG}{\mathcal{G}}
\begin{document}
\title{On the $d$-cluster generalization of Erd\H{o}s-Ko-Rado}
\author{Gabriel Currier \footnote{Department of Mathematics, University of British Columbia, Vancouver, Canada. currierg@math.ubc.ca \newline \indent \indent Keywords: Set-system, Erd\H{o}s-Ko-Rado, Cluster}}
\maketitle
\begin{abstract}
\noindent If $2 \le d \le k$ and $n \ge dk/(d-1)$, a $d$-cluster is defined to be a collection of $d$ elements of ${[n] \choose k}$ with empty intersection and union of size no more than $2k$. Mubayi  conjectured that the largest size of a $d$-cluster-free family $\FF \subset {[n] \choose k}$ is ${n-1 \choose k-1}$, with equality holding only for a maximum-sized star. Here, we resolve Mubayi's conjecture and prove a slightly stronger result, thus completing a new generalization of the Erd\H{o}s-Ko-Rado Theorem.
\end{abstract}
\section{Introduction}
For any $m,n \in \mathbb{Z}$ with $m < n$, we define $[m,n] := \{m,m+1,\dots,n\}$, and $[n] := [1,n]$, and we use ${X \choose k}$ to refer to the set of $k$-element subsets of a set $X$. Furthermore, if every element of a family $\FF \subset {X \choose k}$ contains some $x \in X$, we say that $\FF$ is a star (centered at $x$). We note that the maximum size of a star in ${[n] \choose k}$ is ${n-1 \choose k-1}$ and recall the classical Erd\H{o}s-Ko-Rado (EKR) theorem, which gives an upper bound on the size of ``pairwise intersecting'' set systems.
\begin{theorem}[\cite{ekr}]\label{oldekr}
Let $n \ge 2k$ and suppose that $\FF \subset {[n] \choose k}$ has the property that $A \cap B \neq \emptyset$ for all $A,B \in \FF$. Then $$|\FF| \le {n-1 \choose k-1},$$ where, excluding the case $n = 2k$, equality is achieved only when $\FF$ is a maximum-sized star.
\end{theorem}
\noindent Theorem \ref{oldekr} is one of the fundamental results in extremal combinatorics, and has spawned numerous generalizations and conjectures. In particular, it has generated a whole area of ``intersection problems'' for set systems, in which we consider the maximum size of a family where we have forbidden a certain class of subfamilies, defined according to some intersection and union constraints. One example of such a class which is directly related to the original EKR theorem is the notion of a $d$-cluster, introduced by Mubayi in \cite{3mub}.

\begin{definition}
Let $2 \le d \le k$ with $n \ge dk/(d-1)$ and suppose that $\FF \subset {[n] \choose k}$. Then, if we have $\mathcal{B} = \{B_1,\dots,B_d\} \subset {[n] \choose k}$ such that $|B_1 \cup \dots \cup B_d| \le 2k$ and $B_1 \cap \dots \cap B_d = \emptyset$, we say that $\mathcal{B}$ is a \emph{$d$-cluster.} Furthermore, if $\FF$ contains no such $\mathcal{B}$, we say that $\FF$ is \emph{$d$-cluster-free.}
\end{definition}

\noindent Note here that in the case of $d=2$, the union condition holds automatically. Thus, another way to state Theorem \ref{oldekr} would be to say that $2$-cluster-free families can have size no greater than ${n-1 \choose k-1}$. In \cite{3mub}, Mubayi showed that $3$-cluster-free families must also obey this bound, and conjectured that the same would hold for $d \ge 4$. The primary goal of this paper is to resolve Mubayi's conjecture.
\begin{theorem}\label{mubconj}
Let $2 \le d \le k$ and $n \ge dk/(d-1)$. Furthermore, suppose that $\FF \subset {[n] \choose k}$ is $d$-cluster-free. Then 
$$|\FF| \le {n-1 \choose k-1},$$
and, excepting the case when both $d=2$ and $n = 2k$, equality implies $\FF$ is a maximum-sized star.
\end{theorem}

\noindent We note that the conditions $d\le k$ and $n \ge dk/(d-1)$ are in fact necessary; if we let $d = k+1$ then the problem is equivalent to one of the long-open hypergraph Tur\'an problems, and a different bound applies. Furthermore, if $n < dk/(d-1)$, then any collection of $d$ sets cannot have empty intersection. A number of previous results on this problem have been shown; Katona first proposed the problem as a version of the $d=3$ case, and Frankl and F\"{u}redi in \cite{franfur} obtained the desired bound for $d = 3$ and $n \ge k^2 + 3k$. Mubayi, in addition to resolving the $d=3$ case and proposing the more general problem in \cite{3mub}, showed in \cite{4mub} that the theorem holds when $d = 4$ and $n$ is sufficiently large. Later, Mubayi and Ramadurai \cite{rammub} and independently \"{O}zkahya and F\"{u}redi \cite{ozkfur} showed that it also holds when $d > 4$ and $n$ is sufficiently large. In \cite{kevmub}, Keevash and Mubayi solved another case of this problem, namely where both $k/n$ and $n/2-k$ are bounded away from zero. The case of $n < 2k$ (where, again, the union condition holds automatically) was resolved by Frankl in \cite{spernfam} (where the bound was established) and in \cite{shiftfrank} (where equality was characterized). 
\\\\
We note finally that the problem of $d$-clusters is closely related to several other old problems in extremal set theory, in particular the simplex and special simplex conjectures of Chv\'atal \cite{chva} and Frankl and F\"uredi \cite{franfur2}, and more generally the so-called Tur\'an problems for expansion. For a detailed historical account of these kinds of problems, as well as some exciting new results, we direct the interested reader to \cite{kellif}. We also note that most remaining cases of Chv\'atal's conjecture were shown in a recent paper of the author \cite{me}, in which an alternative (but related) proof of Theorem \ref{mubconj} for a more limited range of parameters is also given. However, the present manuscript, and the techniques described here, appeared first.
\\\\
\noindent Despite the success in tackling the problem of clusters for large values of $n$, techniques that could obtain the desired bound for $d \ge 4$ and all $n \ge 2k$ have remained largely elusive. In this paper, we bridge the gap by giving a slightly more general result that is valid for all $d \ge 2$ and $n \ge 2k$, which in combination with the known results on $n < 2k$ will give us Theorem \ref{mubconj}. Our more general theorem will give us more powerful inductive tools that we will leverage in our proof, and is based on a method of induction introduced in \cite{4mub}. 

\begin{theorem}\label{mainthm}
Let $2 \le d \le k \le n/2$.  Furthermore, suppose that $\FF \subset {[n] \choose k}$, and that $\FF^* \subset \FF$ has the property that any $d$-cluster in $\FF$ is contained entirely in $\FF^*$. Then, 
$$|\FF^*| + \frac{n}{k}|\FF \setminus \FF^*| \le {n \choose k}.$$
Furthermore, excepting the case where both $d = 2$ and $n = 2k$, equality implies one of the following:
\begin{enumerate}
\item $\FF^* = \emptyset$ and $\FF$ is a maximum-sized star,
\item $\FF = \FF^* = {[n] \choose k}$.
\end{enumerate}
\end{theorem}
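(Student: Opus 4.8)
Write $\GG := \FF - \FF^*$, so the hypothesis says precisely that no $d$-cluster of $\FF$ meets $\GG$ and the target reads $|\FF^*| + \frac{n}{k}|\GG| \le \binom{n}{k}$. The plan is to argue by induction on $k$, the inductive step lowering the uniformity from $k$ to $k-1$ with $d$ fixed, and with $n=2k$ (for every $k$) and $k=d$ (for every $n$) serving as base cases.

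For $n=2k$ I would argue via complementary pairs. Since $F$ and $\bar{F}:=[2k]\setminus F$ are disjoint with $F\cup\bar{F}=[2k]$, any $d$ distinct sets $\{F,\bar{F},B_1,\dots,B_{d-2}\}$ from $\binom{[2k]}{k}$ form a $d$-cluster; hence if $F\in\GG$ and $|\FF|\ge d$ then $\bar{F}\notin\FF$. So on each of the $\binom{2k-1}{k-1}$ complementary pairs the weight $w(F):=\mathbf{1}[F\in\FF^*]+2\cdot\mathbf{1}[F\in\GG]$ satisfies $w(F)+w(\bar{F})\le 2$, and summing yields $|\FF^*|+2|\GG|\le 2\binom{2k-1}{k-1}=\binom{2k}{k}$. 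For equality with $d\ge 3$: picking $G\in\FF^*$ (so $\bar{G}\in\FF^*$, forced on that pair) and $F\in\GG$ makes $\{F,G,\bar{G},\dots\}$ a $d$-cluster through $F$, which is impossible; hence equality forces $\GG=\emptyset$ (case (ii)) or $\FF^*=\emptyset$, and in the latter case one must still check that a $d$-cluster-free family of size $\binom{2k-1}{k-1}$ in $\binom{[2k]}{k}$ is a star -- which is false exactly when $d=2$, the stated exception.

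For the inductive step ($n>2k$ and $k>d$) I would first reduce to $\FF,\FF^*$ left-compressed, verifying that each shift $S_{ij}$ applied simultaneously to $\FF$ and $\FF^*$ preserves ``no $d$-cluster of $\FF$ meets $\GG$'' and does not decrease $|\FF^*|+\frac{n}{k}|\GG|$ (the cluster analogue of ``shifting preserves being intersecting''). For $x\in[n]$ set $\FF_x:=\{F\setminus x:x\in F\in\FF\}\subseteq\binom{[n]\setminus x}{k-1}$, with $\GG_x$ and $\FF^*_x$ defined analogously; note $\sum_{x}|\FF^*_x|=k|\FF^*|$ and $\sum_{x}|\GG_x|=k|\GG|$. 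The key point is that after compression \emph{every $x>k$ is good}, meaning that any $d$ sets $C_1,\dots,C_d\in\FF_x$ with empty intersection and union at most $2(k-1)$ lie in $\FF^*_x$: if some $C_i\cup\{x\}\in\GG$, choose $j\ne i$ and $m<x$ with $m\notin C_j$ (possible since $|C_j|=k-1<x-1$); then $C_j\cup\{m\}\in\FF$ by compression, and $\{C_i\cup\{x\}\}\cup\{C_j\cup\{m\}\}\cup\{C_\ell\cup\{x\}:\ell\ne i,j\}$ is a $d$-cluster of $\FF$ meeting $\GG$, a contradiction. Thus the inductive hypothesis at uniformity $k-1$ (valid since $k-1\ge d$) applies to each good link, giving $|\FF^*_x|+\frac{n-1}{k-1}|\GG_x|\le\binom{n-1}{k-1}$. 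If every vertex is good, summing over $x$ and dividing by $k$ gives $|\FF^*|+\frac{n-1}{k-1}|\GG|\le\frac{n}{k}\binom{n-1}{k-1}=\binom{n}{k}$; since $\frac{n-1}{k-1}>\frac{n}{k}$ (as $n>k$) this implies the target, strictly unless $\GG=\emptyset$, i.e.\ unless we are in case (ii). When some $x\le k$ is bad, the same computation forces (after compression) a member of $\GG$ with least element $x$ together with $d-1$ sets of $\FF$ each containing $[1,x]$, and a stability-type analysis, assisted by the complementary-pair mechanism, should confine this to the situation where $\GG$ is a maximum-sized star and $\FF^*=\emptyset$ (case (i)), the summed link bound finishing the proof otherwise.

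The main obstacle, which has no analogue in the classical $d=2$ case, is that a $d$-cluster inside a link need \emph{not} lift to a $d$-cluster of $\FF$, so links are not automatically well-behaved; the crux is to use left-compression to confine misbehaving links to the vertices $1,\dots,k$ and then a structural/stability argument to show that such misbehaviour forces $\FF$ to be a star or all of $\binom{[n]}{k}$. Subsidiary difficulties are the compatibility of left-compression with the cluster hypothesis, and the base case $k=d$ with $n>2k$ (not covered by the complementary-pair argument and not amenable to the link reduction), which itself needs a direct EKR-type argument. The equality characterisation then follows by tracing equality back through every inequality used, down to the ground case $n=2k$.
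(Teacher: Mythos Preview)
Your plan differs from the paper's in a structural way: you induct on $k$ with $d$ fixed, while the paper inducts on $d$ with the base case $d=2$ handled by a Katona cycle argument (their Theorem~\ref{newekr}). This choice of induction variable is not cosmetic. Your scheme bottoms out at $k=d$ with $n>2k$, which you acknowledge ``needs a direct EKR-type argument'' but do not supply; yet this is precisely the full-strength theorem for those parameters, so the induction has no genuine base. By contrast, the paper's induction on $d$ terminates at $d=2$, where the weighted inequality $|\FF^*|+\tfrac{n}{k}|\FF-\FF^*|\le\binom{n}{k}$ is an honest strengthening of EKR that yields to the cycle method.

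There is also a concrete gap in your link step. You claim that for $x>k$ the collection $\{C_i\cup\{x\}\}\cup\{C_j\cup\{m\}\}\cup\{C_\ell\cup\{x\}:\ell\ne i,j\}$ is a $d$-cluster, but its intersection equals $\{m\}\cap\bigcap_{\ell\ne j}C_\ell$, which is nonempty whenever $m\in\bigcap_{\ell\ne j}C_\ell$; your condition $m\notin C_j$ does not prevent this, and for $k<x<2k$ one cannot in general choose $m<x$ avoiding both $C_j$ and $\bigcap_{\ell\ne j}C_\ell$. The paper's Proposition~\ref{trianglepart} sidesteps exactly this obstruction by looking for $(d-1)$-clusters in the link and lifting to a $d$-cluster via an \emph{extra} set $D_{i_0}\cup\{y\}$ (available whenever $|\bigtriangledown_\FF(D_{i_0})|\ge 2$), which removes $x$ from the intersection without introducing a new common element. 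The price is that the role of $\FF^*$ in the link must absorb the sets $D$ with $|\bigtriangledown_\FF(D)|=1$; the paper controls their total contribution by the counting identity of Proposition~\ref{ineq}, and the algebra then closes without compression, without singling out ``bad'' vertices, and without a separate $n=2k$ case. Finally, two further points you defer --- that simultaneous shifting of $\FF$ and $\FF^*$ preserves ``every $d$-cluster of $\FF$ lies in $\FF^*$'', and the handling of bad vertices $x\le k$ via ``a stability-type analysis'' --- are each nontrivial and would need real arguments; the paper's route requires neither.
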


\noindent Note that Theorem \ref{mainthm} reduces to Theorem \ref{mubconj} for all $n \ge 2k$ if we set $\FF^* = \emptyset$. The $d=2$ case of Theorem \ref{mainthm} is itself an interesting strengthening of Erd\H os-Ko-Rado, and has actually appeared before in the literature. To our knowledge, it was shown first by Borg in \cite{borg1} using shadow techniques, and by Borg and Leader in \cite{borg2} using cycle methods, and can also be seen as a consequence of a more general result in \cite{frankup} (Theorem $9$ with $c = \frac{n-k}{k}$). These results are all related to \emph{cross-intersecting} families; that is, families $\FF,\GG \subset {[n] \choose k}$ such that for all $A \in \FF$ and $B \in \GG$ we have $A \cap B \neq \emptyset$. In fact, the condition required for Theorem \ref{mainthm} to hold (in the $d=2$ case) is equivalent to requiring that $\FF$ and $\FF \setminus \FF^*$ are cross-intersecting. The connection between Theorem \ref{mainthm} and the notion of cross-intersecting families is an interesting one and perhaps deserves further attention.

\section{Proof of Theorem \ref{mainthm}}

\noindent Before we proceed with the proof of Theorem \ref{mainthm}, we will need some auxiliary results and will use the following notation. For any $\FF \subset {[n] \choose k}$ and $D \subset [n]$, we let

$$\bigtriangledown_\FF(D) := \{B \setminus D :  B \in \FF \text{ and } D \subset B\}.$$

\noindent Furthermore, when $D = \{x\}$, we write simply $\bigtriangledown_\FF(x)$. Our proof will proceed by induction on $d$, and the following result will allow us to perform this induction. It is a stronger version of a proposition from \cite{4mub} that was later stated more clearly in \cite{rammub}.

\begin{proposition}\label{trianglepart}
Let $3 \le d \le k$ and $n \ge dk/(d-1)$. Furthermore, suppose $\FF^* \subset \FF \subset {[n] \choose k}$ has the property that any $d$-cluster in $\FF$ is contained in $\FF^*$. Then, if $\{D_1,\dots,D_{d-1}\} \subset \bigtriangledown_\FF(x)$ is a $(d-1)$ cluster, it follows that for each $i \in [d-1]$, we have $|\bigtriangledown_\FF(D_i)| = 1$ or $D_i \in \bigtriangledown_{\FF^*}(x)$.
\end{proposition}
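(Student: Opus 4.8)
The plan is to prove, one index at a time, the disjunction in the statement: I fix $i \in [d-1]$, assume $|\bigtriangledown_\FF(D_i)| \ne 1$, and deduce $D_i \in \triangle_x(\FF^*)$. Since $D_i \in \triangle_x(\FF)$ gives $D_i \cup \{x\} \in \bigtriangledown_\FF(D_i)$, the assumption forces $|\bigtriangledown_\FF(D_i)| \ge 2$, so I may choose some $B_i \in \bigtriangledown_\FF(D_i)$ with $B_i \ne D_i \cup \{x\}$. As $D_i \subset B_i$ and $|B_i| = k = |D_i| + 1$, we have $B_i = D_i \cup \{y_i\}$ for a unique element $y_i \notin D_i$, and $y_i \ne x$ because $B_i \ne D_i \cup \{x\}$. (Note also $x \notin D_i$, since $D_i \cup \{x\}$ has size $k$; this gets used twice.)

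The key step is to build a $d$-cluster inside $\FF$ that contains $D_i \cup \{x\}$, namely
$$\mathcal{C} = \{D_1 \cup \{x\}, \ldots, D_{d-1} \cup \{x\}, \ D_i \cup \{y_i\}\}.$$
Each $D_j \cup \{x\}$ lies in $\FF$ because $D_j \in \triangle_x(\FF)$, and $D_i \cup \{y_i\} = B_i \in \FF$. I would then verify the three things needed for $\mathcal{C}$ to be a genuine $d$-cluster. First, the listed sets are distinct: the $D_j \cup \{x\}$ are pairwise distinct because the $D_j$ are, and $D_i \cup \{y_i\}$ differs from every $D_j \cup \{x\}$ since $x \notin D_i$ and $x \ne y_i$, so $\mathcal{C}$ really has $d$ members. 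Second, the intersection is empty: $\bigcap_{j=1}^{d-1}(D_j \cup \{x\}) = (\bigcap_{j} D_j) \cup \{x\} = \{x\}$, because $\{D_1,\ldots,D_{d-1}\}$ has empty intersection, and intersecting this with $D_i \cup \{y_i\}$ removes $x$ (again using $x \notin D_i$, $x \ne y_i$). Third, the union is small: $\bigcup \mathcal{C} = (\bigcup_{j=1}^{d-1} D_j) \cup \{x, y_i\}$ has size at most $2(k-1) + 2 = 2k$, using that $\{D_1, \ldots, D_{d-1}\}$ is a $(d-1)$-cluster.

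Once $\mathcal{C}$ is established as a $d$-cluster contained in $\FF$, the hypothesis on $\FF^*$ gives $\mathcal{C} \subseteq \FF^*$, so $D_i \cup \{x\} \in \FF^*$, i.e. $D_i \in \triangle_x(\FF^*)$; since $i \in [d-1]$ was arbitrary, this is exactly the claim. I do not expect a serious obstacle here: the one place to be careful is checking that $\mathcal{C}$ genuinely has $d$ distinct members (otherwise it would not qualify as a $d$-cluster), and making sure $x \notin D_i$ is invoked wherever it is needed. The argument uses only the two defining conditions of a cluster (empty intersection, union at most twice the common set size), so the numerical hypotheses on $n,k,d$ are needed only to make the statement meaningful.
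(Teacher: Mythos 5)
Your proposal is correct and is essentially the paper's own argument: you build the same $d$-cluster $\{D_1\cup\{x\},\dots,D_{d-1}\cup\{x\},\,D_i\cup\{y_i\}\}$, verify distinctness, empty intersection, and union at most $2k$ in the same way, and invoke the hypothesis on $\FF^*$ (the paper phrases it as a contradiction, you phrase it directly, which is an immaterial difference). No gaps.
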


\begin{proof}
Suppose for the sake of contradiction that there exists $i_0 \in [d-1]$ such that $|\bigtriangledown_\FF(D_{i_0})| \ge 2$ and $D_{i_0} \notin \bigtriangledown_{\FF^*}(x)$. Then, we know both that $(D_{i_0} \cup \{x\}) \in (\FF \setminus \FF^*)$ and that $(D_{i_0} \cup \{y\}) \in \FF$ for some $y \in [n] \setminus \{x\}$. Then
$$|(D_1 \cup \{x\}) \cup (D_2 \cup \{x\}) \cup \dots \cup (D_{d-1} \cup \{x\}) \cup (D_{i_0} \cup \{y\})| \le |D_1 \cup \dots \cup D_{d-1}| + |\{x,y\}| \le 2(k-1) + 2 = 2k,$$
and since, $x,y \notin D_{i_0}$, we see
$$(D_1 \cup\{x\}) \cap \dots \cap (D_{d-1} \cup \{x\}) \cap (D_{i_0} \cup \{y\}) \subset D_1 \cap \dots \cap D_{d-1} = \emptyset.$$
Furthermore, since $x \notin D_{i_0}$, we know that $(D_{i_0} \cup \{y\}) \neq (D_j \cup \{x\})$ for any $j \in [d-1]$. Thus, $\FF$ contains a $d$-cluster not contained entirely in $\FF^*$, which is a contradiction.

\end{proof}
\noindent The following proposition, furthermore, will assist us in a proof of Theorem \ref{mainthm}.
\begin{proposition}\label{ineq}
Let $2 \le k < n$ and $\FF \subset {[n] \choose k}$. Then, the following hold:
\begin{enumerate}
\item $\sum_{x \in [n]} |\bigtriangledown_\FF(x)| = k|\FF|$,
\item $|\{D \in {[n] \choose k-1} \ : \ |\bigtriangledown_\FF(D)| = 1\}| \le \frac{n{n-1 \choose k-1} - k|\FF|}{n-k}$.
\end{enumerate}
\end{proposition}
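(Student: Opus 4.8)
The plan is to prove both statements by double counting incidences, without invoking anything beyond the definitions.

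For part (i) I would count pairs $(x,B)$ with $x \in [n]$, $B \in \FF$, and $x \in B$. Fixing $x$ and varying $B$: the map $B \mapsto B - \{x\}$ is a bijection from $\{B \in \FF : x \in B\}$ onto $\triangle_x(\FF)$, so the number of such $B$ is exactly $|\triangle_x(\FF)|$, and summing over $x$ gives $\sum_{x \in [n]} |\triangle_x(\FF)|$. Fixing $B$ and varying $x$: each $B \in \FF$ has exactly $k$ elements, contributing $k$ pairs, so the total is $k|\FF|$. Equating the two counts gives the identity. This step is immediate.

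For part (ii), set $\DD = \{D \in {[n] \choose k-1} : |\bigtriangledown_\FF(D)| = 1\}$. The key observation is that if $D \in \DD$ and $B_D$ denotes the unique member of $\FF$ containing $D$, then for each of the $n-k$ elements $y \in [n] - B_D$ the set $D \cup \{y\}$ is a $k$-set (since $y \notin D \subset B_D$) which differs from $B_D$ and hence, by uniqueness of $B_D$, lies in ${[n] \choose k} - \FF$. I would then double count pairs $(D,E)$ with $D \in \DD$, $E \in {[n] \choose k} - \FF$, and $D \subset E$: by the observation this count is at least $(n-k)|\DD|$, while counting over $E$ first it is at most $k\bigl({n \choose k} - |\FF|\bigr)$, since every $k$-set has exactly $k$ subsets of size $k-1$. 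Rearranging gives $|\DD| \le \frac{k({n \choose k} - |\FF|)}{n-k}$, and the elementary identity $k{n \choose k} = n{n-1 \choose k-1}$ rewrites the right-hand side in the stated form.

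There is essentially no serious obstacle here; the only points requiring care are that $n > k$ (given in the hypothesis) is needed so that $[n] - B_D$ is nonempty, making the observation meaningful, and that one should record the binomial identity $n\binom{n-1}{k-1} = k\binom{n}{k}$ to see that the derived bound matches the one claimed. Both parts are otherwise routine counting.
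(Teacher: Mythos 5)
Your proof is correct and is essentially the same argument as the paper's: part (i) is the standard incidence count, and for part (ii) the paper also passes to the complement family $\FF^C$, using that each $D$ with $|\bigtriangledown_\FF(D)|=1$ lies in exactly $n-k$ members of $\FF^C$ and that the total number of such incidences is $k\bigl(\binom{n}{k}-|\FF|\bigr)=n\binom{n-1}{k-1}-k|\FF|$. You phrase the count directly via pairs $(D,E)$ while the paper routes it through $\sum_x|\triangle_x(\FF^C)|$, but the underlying double count is identical.
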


\begin{proof}
The proof of $(i)$ is straightforward from the definitions, so we will focus on $(ii)$. First, we denote by $\FF^C$ the complement of $\FF$ in ${[n] \choose k}$, and observe that 
$$\sum_{x \in [n]} (|\bigtriangledown_\FF(x)| + |\bigtriangledown_{\FF^C}(x)|) =  n{n-1 \choose k-1}.$$
Using this along with $(i)$, we see
\begin{align*}
|\{D \in {[n] \choose k-1} : |\bigtriangledown_\FF(D)| = 1\}| &= |\{D \in {[n] \choose k-1} : |\bigtriangledown_{\FF^C}(D)| = n-k\}| \\
&\le \frac{\sum_{x\in[n]}|\bigtriangledown_{\FF^C}(x)|}{n-k}\\
&=\frac{n{n-1 \choose k-1} - \sum_{x\in[n]}|\bigtriangledown_\FF(x)|}{n-k} \\
&= \frac{n{n-1 \choose k-1} - k|\FF|}{n-k}.
\end{align*}
\end{proof}
\noindent We now begin with the proof of our main result.

\begin{proof}[Proof of Theorem \ref{mainthm}]
Let $\FF^* \subset \FF$ be as described. We note as before that the $d=2$ case appears already in the literature, so we suppose $d \ge 3$. For any $x \in [n]$, we let
$$\bigtriangledown^*_{\FF}(x) := \{D \in \bigtriangledown_\FF(x) : |\bigtriangledown_\FF(D)| = 1\} \cup \bigtriangledown_{\FF^*}(x).$$
Observe first that the sets $\{D \in \bigtriangledown_\FF(x) : |\bigtriangledown_\FF(D)| = 1\}$ over $x \in [n]$ partition $\{D \in {[n] \choose k-1} : |\bigtriangledown_\FF(D)| = 1\}$. Using this and Proposition \ref{ineq} yields
\begin{align}
\sum_{x \in [n]}|\bigtriangledown^*_{\FF}(x)| &\le |\{D \in {[n] \choose k-1} : |\bigtriangledown_\FF(D)| = 1\}| + \sum_{x\in[n]} |\bigtriangledown_{\FF^*}(x)| \nonumber \\
&\le  \frac{n{n-1 \choose k-1} - k|\FF|}{n-k} + k|\FF^*| \label{startri}.
\end{align}
Furthermore, by Proposition \ref{trianglepart}, we know that any $(d-1)$-cluster in $\bigtriangledown_\FF(x)$ is contained in $\bigtriangledown^*_{\FF}(x)$. Since $\bigtriangledown_\FF(x) \subset {[n] \setminus \{x\} \choose k-1}$ and $(d-1) \le (k-1) < (n-1)/2$, we may apply induction on $d$ to get
\begin{align}
{n-1 \choose k-1} \ge |\bigtriangledown^*_{\FF}(x)| + \frac{n-1}{k-1}|\bigtriangledown_\FF(x) \setminus \bigtriangledown^*_{\FF}(x)| = \frac{n-1}{k-1}|\bigtriangledown_\FF(x)| - \frac{n-k}{k-1}|\bigtriangledown^*_{\FF}(x)|.\label{xline}
\end{align}
Then, summing over all $x \in [n]$ and using Proposition \ref{ineq} in combination with (\ref{startri}), we see

\begin{align*}
n{n-1 \choose k-1} &\ge \frac{n-1}{k-1}\sum_{x \in [n]}|\bigtriangledown_\FF(x)| - \frac{n-k}{k-1}\sum_{x \in [n]}|\bigtriangledown^*_{\FF}(x)|\\
&\ge \frac{n-1}{k-1}k|\FF| - \frac{n-k}{k-1}k|\FF^*| - \Big(\frac{n-k}{k-1}\Big)\frac{n{n-1 \choose k-1} - k|\FF|}{n-k} \\
&=\frac{nk}{k-1}|\FF| - \frac{(n-k)k}{k-1}|\FF^*| - \frac{n{n-1 \choose k-1}}{k-1},
\end{align*}
and therefore
\begin{align*}
\frac{nk}{k-1}{n-1 \choose k-1} \ge \frac{nk}{k-1}|\FF| - \frac{(n-k)k}{k-1}|\FF^*|.
\end{align*}
Finally, multiplying both sides by $\frac{k-1}{k^2}$ gives us
\begin{align*}
{n \choose k} \ge \frac{n}{k}|\FF| -\frac{n-k}{k}|\FF^*| = |\FF^*| + \frac{n}{k}|\FF \setminus \FF^*|.
\end{align*}
Suppose now that we have equality - that is, that $|\FF^*| + (\frac{n}{k})|\FF \setminus \FF^*| = {n \choose k}$. This implies two things. First, we get that $|\FF| \ge {n-1 \choose k-1}$ with $|\FF| = {n-1 \choose k-1}$ only if $\FF^* = \emptyset$. Additionally, we must have equality in (\ref{xline}) for all $x \in [n]$, that is, $$|\bigtriangledown^*_{\FF}(x)| + \frac{n-1}{k-1}|\bigtriangledown_\FF(x) \setminus \bigtriangledown^*_{\FF}(x)| = {n-1 \choose k-1}.$$
Furthermore, since $(n-1) > 2(k-1)$, we get by induction that either $\bigtriangledown^*_{\FF}(x) = \bigtriangledown_\FF(x)= {[n] \setminus \{x\} \choose k-1}$ or $\bigtriangledown_\FF(x)$ is a maximum sized star and $\bigtriangledown^*_{\FF}(x) = \emptyset$. Suppose for the sake of contradiction that the latter is true for all $x \in [n]$. Then $|\bigtriangledown_\FF(x)| = {n-2 \choose k-2}$  for all  $x \in [n]$, and using proposition \ref{ineq} yields
$$|\FF| = \frac{\sum_{x \in [n]}|\bigtriangledown_\FF(x)|}{k} = \frac{n}{k}{n-2 \choose k-2} < \frac{n-1}{k-1}{n-2 \choose k-2} = {n-1 \choose k-1},$$
which is a contradiction. Thus, there exists $x_0 \in [n]$ such that $\bigtriangledown_{\FF}(x_0) = {[n] \setminus \{x_0\} \choose k-1}$, and therefore $\FF$ contains a maximum-sized star centered at $x_0$. If $|\FF| = {n-1 \choose k-1}$ this implies that  $\FF$ is, itself, a maximum size star centered at $x_0$ and that $\FF^* = \emptyset$. Now, suppose $|\FF| >{n-1 \choose k-1}$ and take $B,B' \in \FF$ such that exactly one of $B,B'$ contains $x_0$ (note that there must exist at least one element of $\FF$ not containing $x_0$). Furthermore, take $Z \subseteq [n]$ such that $|Z| = 2k$ and $(B \cup B') \subset Z$. Then, since $|B \cap B'|  \le k-1$ we know that $|Z \setminus \{x_0\} \setminus (B \cap B')| \ge k$ and thus there exist distinct
$$D_1,\dots,D_{d-2} \in {Z \setminus \{x_0\} \setminus (B \cap B') \choose k-1}$$
such that $(D_i \cup \{x_0\}) \neq B,B'$ for all $i \in [d-2]$. Furthermore, since $\FF$ contains a maximum-sized star centered at $x_0$, we get $(D_i \cup \{x_0\}) \in \FF$ for all $i \in [d-2]$ and
$$|B \cup B' \cup (D_1 \cup \{x_0\}) \cup \dots \cup (D_{d-2} \cup \{x_0\})| \le |Z| = 2k.$$
Additionally, because $x_0$ is not in one of $B$ or $B'$, we see
$$B \cap B' \cap (D_1 \cup \{x_0\}) \cap \dots \cap (D_{d-2} \cup \{x_0\}) = \emptyset.$$
Thus, every element of $\FF$ is part of a $d$-cluster. Since all $d$-clusters in $\FF$ are contained in $\FF^*$, we get that $\FF = \FF^*$ and thus that $|\FF| = {n \choose k}$ and $\FF = {[n] \choose k}$. This completes the proof.
\end{proof}

\noindent {\bfseries Acknowledgements:} I would like to thank my professors Dr. Shahriar Shahriari and Dr. Ghassan Sarkis as well as the Pomona College Research Circle for first introducing me to this problem, and working on it with me in the early stages. In particular, I would like to thank Archer Wheeler for continuing to work with me on this project (in its various forms) as well as Dr. Shahriar Shahriari, Dr. Richard Anstee, and Dr. Jozsef Solymosi for their support and helpful comments. Finally, I am very grateful to the referees for a careful reading, helpful comments, and a number of additional references (in particular concerning the $d=2$ case of Theorem \ref{mainthm}).

\bibliographystyle{amsplain}
\bibliography{set_bib}
\end{document}